\providecommand{\U}[1]{\protect\rule{.1in}{.1in}}
\newtheorem{theorem}{Theorem}[section]
\theoremstyle{definition}
\newtheorem{definition}{Definition}[section]
\theoremstyle{remark}
\newtheorem{remark}{Remark}[section]
\numberwithin{equation}{section}
\begin{document}
		
\title{Partially Exact Controllability of Semilinear Heat Exchanger Systems}

\author[1]{Ismail Huseynov}
\author[2]{Arzu Ahmadova }
\author[3]{Agamirza E. Bashirov}

\affil[1]{Department of Mathematical Modeling and Data Analysis, Physikalisch-Technische Bundesanstalt, 10587 Berlin, Germany\\ \texttt{ismail.huseynov@ptb.de}}
\affil[2]{Faculty of Mathematics, University of Duisburg-Essen, 45127, Essen, Germany \\ \texttt{arzu.ahmadova@uni-due.de}}
\affil[3]{Department of Mathematics, Eastern Mediterranean University, Gazimagusa, Mersin 10, 99628, Turkey, \\ \texttt{agamirza.bashirov@emu.edu.tr}}
\maketitle
		
\begin{abstract}
In this paper, we study two semilinear systems describing a monotubular and a two-stream heat exchanger. Neither system is exactly controllable; however, for each we specify a subspace of the state space with respect to which the system is exactly controllable, thus establishing partial exact controllability.
\end{abstract}		
\textit{Keywords:}
Exact controllability, partial controllability, heat exchanger equations, semilinear system, perturbation theory

	
\section{Introduction}

Parallel-flow heat exchangers are critical components in thermal systems, facilitating efficient heat transfer between fluids or gases at different temperatures. These devices are integral to a variety of industrial processes and engineering applications, enabling effective thermal energy exchange, as detailed in Mills' monograph \cite{M}. To model and analyze the performance of these systems, several mathematical formulations have been developed that capture the inherent configurations and complexities of the heat exchange process. Notably, the literature has focused on mono-tubular and two-stream parallel-flow heat exchanger equations. These models form the foundation for investigating dynamic behavior, stability, and control properties, particularly in the context of observability, reachability, and controllability -- key concepts in control theory.

In \cite{SH1}, the authors investigate the dynamical properties of parallel-flow heat exchanger equations under the configuration of boundary control at the inlet and observation at the outlet. They show that, a suitably approximated version of the system is observable if both fluid temperatures are measured at the outlet, and it remains observable with respect to the non-negative cone of the state space even if only one temperature measurement is available. Moreover, the system is reachable when both fluid temperatures are controlled at the inlet, and the reachability with respect to the non-negative cone is retained if only one control input is utilized. Further analysis in \cite{SH2} establishes the reachability of the parallel-flow heat exchanger equations. 

Additionally, in \cite{LC}, the authors derive conditions for exact observability of the two-stream parallel-flow heat exchanger equation with boundary inputs and dual temperature measurements, considering a specific class of initial conditions and inputs. As an extension of the two-stream model, three-fluid parallel-channel heat exchangers have been solved analytically in \cite{MC, BM2005} by formulating them as boundary control systems.

It is noteworthy that the mathematical modeling of heat‐exchanger systems often relies on perturbation theory for linear operators on Hilbert spaces. Early works \cite{SH1,SH2,LC,MC,BM2005}  treat the heat‐exchanger equations as abstractly perturbed differential systems.

A key motivation for developing perturbation theory of strongly continuous operator families, especially the left‐ and right‐translation semigroups, stems from their efficacy in formulating evolution equations for various physical processes, including heat exchange. Recent advances have extended these methods for a class of linear operators: for instance, perturbation results for the generators of cosine operator families have been obtained in \cite{cosine1,cosine2}, and applied to a one‐dimensional perturbed wave equation in a fractional setting. Likewise, \cite{nilpotent} demonstrates that nilpotency is preserved under certain semigroup perturbations; this property is then invoked to give a rigorous account of specific thermodynamic behaviors in heat exchangers.

In this paper, our aim is an investigation of controllability properties of semilinear heat exchanger systems. A representative example of these systems is the linear mono-tubular heat exchanger model governed by the partial differential equation:
\begin{equation}
\label{1.1}
\begin{cases}
\frac{\partial T(t,\theta )}{\partial t} = -\frac{\partial T(t,\theta )}{\partial \theta } -aT(t,\theta ) + bu(t,\theta ), \quad t>0, \quad 0 <\theta  < 1, \\
 T(0,\theta )=T_{0}(\theta ),\ T(t,1)=0.
\end{cases}
\end{equation}
Here, $ T(t,\theta ) $ represents the temperature distribution at time $t$ and position $\theta $. The function $T_0(\theta )$ specifies the initial temperature profile, while $T(t,1)=0$ denotes the fixed temperature at the boundary $\theta =1$. In this case, system \eqref{1.1} can be regarded as a perturbation by the physical parameter $a>0$, which represents the thermal capacity of the medium, that is the amount of heat absorbed or released per unit change in temperature.

In \eqref{1.1}, many things depend on the behaviour of the operator $A=-d/d\theta $.  Therefore, letting $x(t)=T(t,\cdot )$ and $u(t)=u(t,\cdot )$ and assuming that $a=0$, we can write system \eqref{1.1} in the abstract form  
\begin{equation}
\label{1.2}
x'(t)=-(d/d\theta )x(t)+bu(t),\quad 0<t\le T,
\end{equation}
where the state space is $X=L_2(0,1)$, $u\in U_{\rm ad}=L_2(0,T;X)$ is a control, the domain of the differential operator $A=-d/d\theta $ is 
\[
D(A)=\{ x\in L_2(0,1): dx/d\theta \in L_2(0,1),\,x(1)=0\} ,
\]
and $b\in \mathbb{R}$ with $b\not= 0$. The operator $A=-d/d\theta $ generates a strongly continuous semigroup 
\begin{equation}
\label{1.3}
[\mathcal{T}(t)x](\theta )=\begin{cases}x(\theta -t) & \text{if } \theta -t\ge 0, \\ 0 & \text{if } \theta -t<0, \end{cases}\quad x\in X,\ 0\le \theta \le 1,\ t\ge 0, 
\end{equation}
which is called a semigroup of \textit{left-translation}. Its adjoint is the semigroup of \textit{right-translation}
\begin{equation}
\label{1.4}
[\mathcal{T}^*(t)x](\theta )=\begin{cases}x(\theta +t) & \text{if } \theta +t\le 1, \\ 0 & \text{if } \theta +t>1, \end{cases}\quad x\in X,\ 0\le \theta \le 1,\ t\ge 0,
\end{equation}
generated by the operator $A^*=d/d\theta $ with the domain 
\[
D(A^*)=\{ x\in L_2(0,1): dx/d\theta \in L_2(0,1),\,x(0)=0\} .
\]
We obtain that 
\[
[\mathcal{T}(s)\mathcal{T}^*(s)x](\theta )=\begin{cases}x(\theta ) & \text{if } s\le \theta \le 1-s,\ 0\le s\le 1/2, \\ 0 & \text{otherwise}. \end{cases}
\]
Then, the respective controllability operator becomes 
\begin{align*}
[Q(t)x](\theta )
    &=b^2\bigg[ \int _0^t\mathcal{T}(s)\mathcal{T}^*(s)x\,ds\bigg] (\theta )\\
    &=b^2\int _{0}^{\min (t,\theta ,1-\theta )}x(\theta )\,ds=b^2\min (t,\theta ,1-\theta )x(\theta ). 
\end{align*}
Therefore, 
\begin{align*}
\langle Q(t)x,x\rangle _X
   &= b^2\int _0^1\min (t,\theta ,1-\theta )x^2(\theta )\,d\theta \\
   &=b^2\int _0^{1/2}\min (t,\theta )x^2(\theta )\,d\theta +b^2\int _{1/2}^1\min (t,1-\theta )x^2(\theta )\,d\theta .
\end{align*}
Applying the second mean value theorem for integrals \cite{WHS}, we obtain the existence $0<\xi <1/2<\eta <1$ such that
\begin{align}
\label{1.5}
\langle Q(t)x,x\rangle _X
    &= b^2\min (t,1/2)\int _\xi ^{1/2}x^2(\theta )\,d\theta +b^2\min (t,1/2)\int _{1/2}^\eta x^2(\theta )\,d\theta \nonumber \\
    &=b^2\min (t,1/2)\int _\xi ^\eta x^2(\theta )\,d\theta ,
\end{align}
which clearly demonstrate that $Q(t)$ is not coercive (even $Q(t)\not> 0$) because $\langle Q(t)x,x\rangle _X=0$ if $x\in X$ is zero on $(\xi ,\eta )$ but nonzero on $[0,\xi ]\cup [\eta ,1]$. Therefore, system \eqref{1.2} fails to be exactly (even approximately) controllable. However, we can talk about steering initial states to points in the subspace
\[
\{ x\in X : x(\theta )=0 \text{ for } \theta \in [0,\xi ]\cup [\eta ,1]\} 
\]
of $X$. This leads us to the concept of \textit{partially exact controllability}.

The ability of a control system to steer any initial state to any target output is called an \textit{output-controllability} which has been extensively developed in \cite{ETLO, Morse, SK, GD, T} for discrete-time systems. In particular, outputs may be projections of the terminal state. This type of output-controllability, which is called as a \text{partial controllability}, has been defined and investigated in \cite{BMSE, BES} for continuous-time linear deterministic and stochastic systems. Later, it was extended to semilinear systems in \cite{BG, BJ}.  Similarly, the concept of partial observability has been developed for semilinear stochastic systems in finite‐dimensional spaces using an analogous framework in \cite{BA}. A sufficient condition for a partially exact controllability of a semilinear control system in the abstract form is proved in \cite{B-AJC} which employs the ideas and method of the proof from \cite{B}. A specific feature of this method is that it avoids fixed point theorems and is based on piecewise construction of steering controls. This method was first proposed in \cite{BG} for proving approximate controllability and developed in the context of approximate controllability in \cite{HL1, HL2, HL6}.  To the context of exact controllability, it was modified in \cite{B} and to the context of process-controllability in \cite{LFL}. 

In this paper, we prove a sufficient condition of partially exact controllability for a semilinear system in the abstract form modifying the result from \cite{B-AJC}. Then we verify this sufficient condition for a semilinear heat exchanger systems and specify subspaces of the state space with regard to which they are partially exact controllable.


\section{Preliminaries}

In this section, we briefly introduce some notations and definitions from functional analysis, perturbation theory, and control theory used throughout this paper.

All norms and scalar products in Banach and Hilbert spaces will be denoted by $\|\cdot\|$ and $\langle \cdot,\cdot \rangle$, respectively. The context will make clear the spaces involved. By $\mathcal{L}(X,U)$, we denote the space of bounded linear operators from the Hilbert space $X$ to the Hilbert space $U$. The shorthand $\mathcal{L}(X,X)=\mathcal{L}(X)$ is frequently used. When $X=\mathbb{R}^n$ and $U=\mathbb{R}^m$, we let $\mathbb{R}^{m\times n}=\mathcal{L}(X,U)$. The adjoint operator is denoted by $A^*$, and for a matrix $A\in \mathbb{R}^{m\times n}$, $A^*$ coincides with its transpose. The identity operator is denoted by $I$, and zero operator, zero vector, and scalar zero are uniformly denoted by $0$. The domain of a linear operator $A$ will be denoted by $D(A)$. If $A\in \mathcal{L}(X,U)$, then $D(A)=X$. 

For a self-adjoint operator $Q\in \mathcal{L}(X)$ (i.e., $Q^*=Q$), we write $Q\ge 0$ if $\langle Qx,x\rangle \ge 0$ for all $x\in X$, and $Q>0$ if $\langle Qx,x\rangle >0$ for all $x\in X$ with $x\not= 0$. $Q\in \mathcal{L}(X)$ is said to be coercive if there exists $c>0$ such that $\langle Qx,x\rangle \ge c\| x\| ^2$ for all $x\in X$. 

We will use the space $L_2(a,b;X)$ of all square integrable (with respect to the Lebesque measure) $X$-valued functions on $[a,b]$. If $X=\mathbb{R}$, then it will be dropped in the notation of this space.

Let $A_0$ be a densely defined closed linear operator on the Hilbert space $X$ generating a strongly continuous semigroup $\{\mathcal{T}(t): t\geq 0\}$ and let $A_1\in \mathcal{L}(X)$. The semigroup generated by $A_1$ has the stronger uniform continuity property and the power series expansion   
\[
e^{A_1t}=\sum _{n=0}^\infty \frac{A_1^nt^n}{n!}.
\]
Therefore, it is reasonable to use the notation $e^{A_1t}$ for this special semigroup. The semigroup $\{ \mathcal{S}(t): t\geq 0\}$ generated by $A_0+A_1$ with $D(A_0+A_1)=D(A_0)$ is called a bounded perturbation of $\{ \mathcal{T}(t): t\geq 0\}$. Below, we will use the following particular case of the theorem from \cite{TR}.
\begin{theorem}
\label{T.2.1}
Let $A_0$ be a densely defined closed linear operator on a Hilbert space $X$ generating a strongly continuous semigroup $\{\mathcal{T}(t):t\geq 0\}$ and let $A_1\in \mathcal{L}(X)$. If $\mathcal{T}(t)$ and $e^{A_1s}$ commute for all positive $t$ and $s$, then the operator $A_0+A_1$ generates the semigroup $\{ \mathcal{T}(t)e^{A_1t}:t\ge 0\} $.
\end{theorem}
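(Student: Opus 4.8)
The plan is to set $\mathcal{S}(t):=\mathcal{T}(t)e^{A_1t}$ and prove two things: that $\{\mathcal{S}(t):t\ge 0\}$ is a strongly continuous semigroup, and that its infinitesimal generator is exactly $A_0+A_1$ with domain $D(A_0)$. Throughout, the hypothesis that $\mathcal{T}(t)$ and $e^{A_1s}$ commute is what makes $\mathcal{S}$ a semigroup, while the boundedness of $A_1$ is what allows the two generators to add cleanly.

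First I would verify the algebraic semigroup laws. Clearly $\mathcal{S}(0)=I$, and for the composition
\[
\mathcal{S}(t)\mathcal{S}(s)=\mathcal{T}(t)e^{A_1t}\mathcal{T}(s)e^{A_1s}=\mathcal{T}(t)\mathcal{T}(s)e^{A_1t}e^{A_1s}=\mathcal{T}(t+s)e^{A_1(t+s)}=\mathcal{S}(t+s),
\]
where the middle equality uses the commutation of $e^{A_1t}$ with $\mathcal{T}(s)$; without this hypothesis the identity fails. Strong continuity at $t=0$ (which suffices for a $C_0$-semigroup) follows because $\mathcal{T}$ is locally bounded, say $\|\mathcal{T}(t)\|\le M$ for $t\in[0,1]$, and $e^{A_1t}\to I$ uniformly: for $x\in X$ one estimates $\|\mathcal{S}(t)x-x\|\le M\|e^{A_1t}x-x\|+\|\mathcal{T}(t)x-x\|\to 0$.

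The crux is the generator. I would first record the auxiliary fact that
\[
\lim_{t\to 0^+}\mathcal{T}(t)\,\frac{e^{A_1t}x-x}{t}=A_1x\qquad\text{for every }x\in X .
\]
Indeed $y_t:=t^{-1}(e^{A_1t}x-x)\to A_1x$ in norm because $A_1$ is bounded, and then $\|\mathcal{T}(t)y_t-A_1x\|\le M\|y_t-A_1x\|+\|\mathcal{T}(t)A_1x-A_1x\|\to 0$ using $\|\mathcal{T}(t)\|\le M$ and the strong continuity of $\mathcal{T}$. Writing $B$ for the generator of $\mathcal{S}$ and splitting the difference quotient as
\[
\frac{\mathcal{S}(t)x-x}{t}=\mathcal{T}(t)\,\frac{e^{A_1t}x-x}{t}+\frac{\mathcal{T}(t)x-x}{t},
\]
the first term on the right always converges, to $A_1x$. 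Hence the left-hand side converges if and only if the second term does, that is if and only if $x\in D(A_0)$; and in that case the limit is $A_0x+A_1x$. This simultaneously yields $D(B)=D(A_0)$ and $Bx=(A_0+A_1)x$, which is the assertion.

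I expect the main obstacle to be this generator step, specifically the interchange of the limit with the bounded operator $\mathcal{T}(t)$ in the auxiliary fact; this is precisely where the local boundedness of the $C_0$-semigroup and the norm-convergence $t^{-1}(e^{A_1t}x-x)\to A_1x$ (valid only because $A_1\in\mathcal{L}(X)$) must be combined. A pleasant feature of this route is that the equality of domains drops out directly from the \emph{if and only if}, so no appeal to the general bounded-perturbation theorem---the very statement being specialized here---is needed.
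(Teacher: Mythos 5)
Your proof is correct. Note, however, that the paper itself offers no proof of this statement at all: it is presented as a particular case of Trotter's theorem on products of semigroups, with a citation to Trotter (1959), so any comparison is between your self-contained argument and an appeal to the literature. Your route is the standard direct one for a commuting bounded perturbation, and each step checks out: the semigroup law follows exactly where you invoke the commutation hypothesis; strong continuity at $t=0$ (which indeed suffices) follows from local boundedness of $\mathcal{T}$ plus uniform continuity of $e^{A_1 t}$; and the decomposition
\[
\frac{\mathcal{S}(t)x-x}{t}=\mathcal{T}(t)\,\frac{e^{A_1t}x-x}{t}+\frac{\mathcal{T}(t)x-x}{t}
\]
cleanly identifies the generator, with the ``if and only if'' giving $D(B)=D(A_0)$ for free rather than only the inclusion $D(A_0)\subseteq D(B)$ --- a point that sloppier treatments gloss over. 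What your approach buys is self-containedness and transparency: one sees precisely where commutativity and the boundedness of $A_1$ enter, and no machinery beyond the definition of the infinitesimal generator is used. What the paper's approach buys is brevity and generality: Trotter's theorem handles products of semigroups in settings well beyond the bounded-perturbation case, at the cost of invoking a result whose proof is far more involved than what is actually needed here.
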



\section{Abstract formulation}

In this section, we formulate an abstract result which will be used to establish controllability of heat exchanger systems. Consider the semilinear control system
\begin{equation}
\label{3.1}
 x'(t) = Ax(t)+Bu(t)+f(t,x(t),u(t)),\quad 0<t\le T,
\end{equation}
with $T>0$, where $x$ and $u$ are state and control processes. Assume the following conditions on the entries of \eqref{3.1}:
\begin{itemize}
\item [{\rm (A)}] $H$, $G$, and $U$ are separable Hilbert spaces and $X=H\times G$.
\item [{\rm (B)}] $A$ is a densely defined closed linear operator on $D(A)\subseteq X$, generating a strongly continuous semigroup $\{ \mathcal{S}(t): t\ge 0\} $.
\item [{\rm (C)}] $B$ is a bounded linear operator from $U$ to $X$.
\item [{\rm (D)}] $f$ is a function from $[0,T]\times X\times U$ to $X$, satisfying
       \begin{itemize}
       \item [$\bullet $] $f$ is Lebesgue measurable in $t$ (the first argument);
       \item [$\bullet $] $f$ is Lipschitz continuous in $x$ (the second argument) in the form
       \[
       \| f(t,x,u)-f(t,y,u)\| \le L(t)\| x-y\| ,
       \]
       where $L\in L_1(0,T)$;
       \item [$\bullet $] $f$ is continuous in $u$ (the third argument);
       \item [$\bullet $] $f$ is bounded in the total argument. 
       \end{itemize}      
\end{itemize}

In (A), $X=H\times G$ is a product Hilbert space with the scalar product defined by 
\[
\langle (h,g),(v,w)\rangle _X=\langle h,v\rangle _H+\langle g,w\rangle _G,\quad h,v\in H,\quad g,w\in G.
\]
Here, we regard $X$ as a state space and $U$ as a control space, considering $U_{\rm ad}=L_2(0,T;U)$ as a set of admissible controls. Under conditions (A)--(D), the equation
\begin{equation}
\label{3.2}
x(t)=\mathcal{S}(t)\xi +\int _0^t\mathcal{S}(t-s)(Bu(s)+f(s,x(s),u(s)))\,ds,\quad 0\le t\le T, 
\end{equation}
has a unique continuous solution for every $u\in U_{\rm ad}$ and $x(0)=\xi \in X$ \cite{LY}. The solution $x$ of \eqref{3.2} is called a mild solution of \eqref{3.1}. For preciseness, it will be denoted by $x^{\xi, u}$. Its terminal value $x^{\xi, u}(T)$ ranges in $X$ and can take values in $X\setminus D(A)$. However. the most important values are those which are in $D(A)$. This motivates to modify the concept of exact controllability and partially exact controllability for \eqref{3.1} in the following way. 

Let $L$ be the linear operator from $X$ to $H$ defined by $L(h,g)=h$, $h\in H$, $g\in G$. Then $L^*h=(h,0)$ and $P_L=L^*L$ is a projection operator, projecting $(h,g)\in X=H\times G$ to $(h,0)$. We denote $\tilde{H}=P_L(X)$ and $\tilde{G}=\tilde{H}^\perp $ (orthogonal complement of $\tilde{H}$). Then $X=\tilde{H}\oplus \tilde{G}$ (the direct sum of $\tilde{H}$ and $\tilde{G}$) while $X=H\times G$.
\begin{definition}
\label{D.3.1}
The system in \eqref{3.1} with the set of admissible controls $U_{\rm ad}$ is said to be 
\begin{description}
\item [\rm (a)] {\it Exactly controllable} on $[0,T]$ if for every $\xi , \eta \in X$, there exists $u\in U_{\rm ad}$ such that $x^{\xi ,u}(T)=\eta $.
\item [\rm (b)] {\it $L$-partially exact controllable} on $[0,T]$ if for every $\xi ,\eta \in X$, there exists $u\in U_{\rm ad}$ such that $Lx^{\xi ,u}(T)=L\eta $.
\item [\rm (c)] {\it Exactly controllable to $D(A)$} on $[0,T]$ if for every $\xi \in X$ and $\eta \in D(A)$, there exists $u\in U_{\rm ad}$ such that $x^{\xi ,u}(T)=\eta $.
\item [\rm (d)] {\it $L$-partially exact controllable to $L(D(A))$} on $[0,T]$ if for every $\xi \in X$ and  $\eta \in D(A)$, there exists $u\in U_{\rm ad}$ such that $Lx^{\xi ,u}(T)=L\eta $.
\end{description}
\end{definition}
The $L$-partially exact controllability is motivated from the fact that some systems such as higher order differential equations, delay equations, wave equations, etc.~can be written in the standard form if the dimension of the state space is enlarged. This trend is observed for stochastic systems as well. It is known that colored noise driven stochastic systems can be reduced to white noise driven system if the dimension of the state space is enlarged \cite{BuJ}. In  \cite{B12, B13}, it is shown that wide band noise driven stochastic systems can also be handled in a similar way. According to the discussion given in the Introduction to this paper, the heat exchanger systems fail to be exactly controllable but they are expected to be partially exact controllable. 

For $f(t,x,u)\equiv 0$, system \eqref{3.1} has the linear form:
 \begin{equation}
 \label{3.3}
 y'(t) = Ay(t)+Bu(t),\ 0\le \tau <t\le T.
 \end{equation}
The controllability Gramian $Q(t)$ associated with this linear system is equal to 
 \[
 {Q}(t)=\int _0^t\mathcal{S}(s)BB^*\mathcal{S}^*(s)\,ds,\quad 0<t\le T.
 \]
Define also the $L$-partial controllability Gramian by 
\[
Q_L(t)=LQ(t)L^*.
\]
Here, $\lim _{t\to 0^+}Q_L(t)=Q_L(0)=0$. We will adjust the rate of convergence in this limit assuming the following condition:
\begin{itemize}
\item [\rm (E)] $Q_L(t)$ is coercive for every $0<t\le T$ and $t\| Q^{-1}_L(t)\| $ is bounded on $(0,T]$.
\end{itemize}

The following theorems express two results about $L$-partial exact controllability.
\begin{theorem} {\rm \cite{BMSE}}
\label{T.3.1}  
Under conditions {\rm (A)--(C)}, linear system \eqref{3.3} is $L$-partially exact controllable on the interval $[\tau ,T]$ if and only if $Q_L(T-\tau )$ is coercive. Moreover, for any given $\xi ,\eta \in X$, a control steering $y(\tau )=\xi $ to $y(T)$ with $Ly(T)=L\eta $ can be defined by 
\begin{equation}
\label{3.4}
u(t)=B^*e^{A^*(T-t)}L^*Q_L(T-\tau )^{-1}L\big( \eta -\mathcal{S}((T-\tau )\xi \big) ,\ \tau \le t\le T.
\end{equation} 
\end{theorem}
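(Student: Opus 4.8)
The plan is to recast $L$-partial exact controllability as the surjectivity of a single bounded linear operator and then invoke the standard Hilbert-space duality between surjectivity of an operator and coercivity of its associated Gramian. First I would write the mild solution of \eqref{3.3} started at $y(\tau)=\xi$,
\[
y(T)=\mathcal{S}(T-\tau)\xi+\int_\tau^T\mathcal{S}(T-s)Bu(s)\,ds,
\]
so that the target condition $Ly(T)=L\eta$ is equivalent to $\Phi u=L\eta-L\mathcal{S}(T-\tau)\xi$, where I introduce the input-output operator $\Phi\colon U_{\rm ad}\to H$ given by $\Phi u=L\int_\tau^T\mathcal{S}(T-s)Bu(s)\,ds$. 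Hence the system is $L$-partially exact controllable on $[\tau,T]$ precisely when $\Phi$ maps onto $H$.

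Next I would compute the adjoint $\Phi^*\colon H\to U_{\rm ad}$. Transferring $L$ to $L^*$ and using $\langle\mathcal{S}(T-s)Bu(s),z\rangle_X=\langle u(s),B^*\mathcal{S}^*(T-s)z\rangle_U$ in the scalar product of $U_{\rm ad}$, a direct calculation gives $(\Phi^*h)(s)=B^*\mathcal{S}^*(T-s)L^*h$. Composing and substituting $r=T-s$ then produces
\[
\Phi\Phi^*h=L\int_0^{T-\tau}\mathcal{S}(r)BB^*\mathcal{S}^*(r)\,dr\,L^*h=LQ(T-\tau)L^*h=Q_L(T-\tau)h,
\]
so that the operator $\Phi\Phi^*$ is exactly the $L$-partial controllability Gramian $Q_L(T-\tau)$.

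The crux is the functional-analytic equivalence: for a bounded operator $\Phi$ between Hilbert spaces, $\Phi$ is surjective if and only if $\Phi\Phi^*$ is coercive. The key identity is $\langle\Phi\Phi^*h,h\rangle=\|\Phi^*h\|^2$. If $Q_L(T-\tau)$ is coercive, then $\Phi^*$ is bounded below, hence injective with closed range; by the closed range theorem $\Phi$ has closed range equal to $\ker(\Phi^*)^\perp=H$, so $\Phi$ is onto. Conversely, if $\Phi$ is surjective, then $\operatorname{ran}(\Phi)=H$ is closed, so $\operatorname{ran}(\Phi^*)$ is closed and $\Phi^*$ is injective (its kernel is $\operatorname{ran}(\Phi)^\perp=\{0\}$); by the inverse mapping theorem $\Phi^*$ is bounded below, which is coercivity of $Q_L(T-\tau)$. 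This double implication yields the stated equivalence.

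Finally, assuming $Q_L(T-\tau)$ coercive and therefore boundedly invertible, I would exhibit the steering control explicitly as $u=\Phi^*Q_L(T-\tau)^{-1}\big(L\eta-L\mathcal{S}(T-\tau)\xi\big)$, whose pointwise form is precisely \eqref{3.4}. The verification is one line, $\Phi u=\Phi\Phi^*Q_L(T-\tau)^{-1}\big(L\eta-L\mathcal{S}(T-\tau)\xi\big)=L\eta-L\mathcal{S}(T-\tau)\xi$, giving $Ly(T)=L\eta$. I expect the main obstacle to be the coercivity–surjectivity equivalence: one must use coercivity rather than mere strict positivity $Q_L>0$, since it is the uniform lower bound that simultaneously guarantees closed range in the forward implication and the boundedness of $Q_L(T-\tau)^{-1}$ needed to make the control formula well defined.
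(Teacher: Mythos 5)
Your proof is correct and complete: the identification $\Phi\Phi^* = Q_L(T-\tau)$, the closed-range/duality equivalence between surjectivity of $\Phi$ and coercivity of $\Phi\Phi^*$, and the one-line verification of the steering control $u=\Phi^*Q_L(T-\tau)^{-1}\bigl(L\eta-L\mathcal{S}(T-\tau)\xi\bigr)$ are all sound, and you correctly flag that coercivity (not mere positivity $Q_L>0$) is what makes both the closed-range step and the bounded inverse work. Note that the paper itself gives no proof of this statement --- it is quoted from the reference [BMSE] --- and your argument is precisely the standard Gramian--duality proof that such results rest on, so there is nothing to contrast; the only cosmetic difference is that the paper writes the adjoint semigroup as $e^{A^*(T-t)}$ in \eqref{3.4} (with a typographical unbalanced parenthesis), which coincides with your $\mathcal{S}^*(T-t)$.
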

\begin{theorem} {\rm \cite{B-AJC}}
\label{T.3.2} 
Under conditions {\rm (A)--(E)} and 
\begin{itemize}
\item [{\rm (F$'$)}] $\mathcal{S}(t)\big( \tilde{G}\big) \subseteq \tilde{G}$, $t\ge 0$,
\end{itemize}
semilinear system \eqref{3.1} is $L$-partially exact controllable to $L(D(A))$ on the interval $[0,T]$.
\end{theorem}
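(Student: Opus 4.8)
The plan is to use hypothesis (F$'$) to collapse the partial controllability of the full semilinear system into an \emph{ordinary} exact controllability problem for the $H$-component alone, and then to build a steering control by a fixed-point-free, piecewise-in-time construction. First I would record the structural consequence of (F$'$): since $\mathcal{S}(t)\tilde G\subseteq \tilde G$, the operator $\mathcal{S}(t)$ is block lower-triangular with respect to $X=\tilde H\oplus \tilde G$, so that $L\mathcal{S}(t)=L\mathcal{S}(t)L^*L$. Equivalently, $\mathcal{S}_H(t):=L\mathcal{S}(t)L^*$ is itself a strongly continuous semigroup on $H$ (with generator $A_H$), and $L\mathcal{S}(t)B=\mathcal{S}_H(t)(LB)$. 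Applying $L$ to the mild-solution identity \eqref{3.2} then gives the affine equation on $H$
\[
Lx(t)=\mathcal{S}_H(t)L\xi+\int_0^t\mathcal{S}_H(t-s)(LB)u(s)\,ds+\int_0^t\mathcal{S}_H(t-s)Lf(s,x(s),u(s))\,ds,
\]
whose controllability Gramian is precisely $Q_L$. By Theorem \ref{T.3.1} the linear part of this $H$-system is exactly controllable on every subinterval, with the explicit steering control \eqref{3.4}. Moreover, the same block structure shows that $\eta\in D(A)$ forces $L\eta\in D(A_H)$, which will supply the decisive rate estimate $\|(I-\mathcal{S}_H(r))L\eta\|\le Cr$.

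Next I would set up the iteration. Fix the target $\zeta:=L\eta$, put $t_n=T(1-2^{-n})$ and $I_n=[t_{n-1},t_n]$ so that $|I_n|=T2^{-n}$. Inductively, given the state $x(t_{n-1})$ produced by the controls already chosen on $[0,t_{n-1}]$, I define $u_n$ on $I_n$ by formula \eqref{3.4}, steering the \emph{linear} $H$-dynamics from $Lx(t_{n-1})$ to the \emph{same} fixed target $\zeta$ at time $t_n$; concretely $u_n(s)=B^*\mathcal{S}^*(t_n-s)L^*Q_L(|I_n|)^{-1}b_n$ with $b_n=\zeta-\mathcal{S}_H(|I_n|)Lx(t_{n-1})$. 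The whole point of always aiming at the fixed $\zeta$ is that $b_n$ depends only on the already-determined state $x(t_{n-1})$, so each $u_n$ solves a purely linear equation and no fixed-point argument is needed. Writing $z_n=Lx(t_n)$ for the actual semilinear state, the linear control contribution cancels the free term and leaves $z_n=\zeta+\theta_n$, where $\theta_n=\int_{I_n}\mathcal{S}_H(t_n-s)Lf(s,x(s),u(s))\,ds$ is the in-interval nonlinear disturbance. Since $f$ is bounded by (D), $\|\theta_n\|\le M'N|I_n|\to 0$ (with $M':=\sup_{0\le r\le T}\|\mathcal{S}_H(r)\|$ and $N$ a bound for $f$), whence $z_n\to\zeta$.

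Finally I would check admissibility and pass to the limit. By Theorem \ref{T.3.1} and condition (E), $\|u_n\|_{L_2(I_n)}^2=\langle Q_L(|I_n|)^{-1}b_n,b_n\rangle\le \|Q_L(|I_n|)^{-1}\|\,\|b_n\|^2\le (M/|I_n|)\|b_n\|^2$. For $n\ge 2$ one has $b_n=(I-\mathcal{S}_H(|I_n|))\zeta-\mathcal{S}_H(|I_n|)\theta_{n-1}$, and the two bounds $\|(I-\mathcal{S}_H(|I_n|))\zeta\|\le C|I_n|$ (from $L\eta\in D(A_H)$) and $\|\theta_{n-1}\|\le M'N|I_{n-1}|$ yield $\|b_n\|=O(2^{-n})$; hence $\|u_n\|_{L_2(I_n)}^2=O(2^{-n})$ is summable and $u:=\sum_n u_n\in U_{\mathrm{ad}}=L_2(0,T;U)$. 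The associated mild solution $x=x^{\xi,u}$ exists and is continuous on $[0,T]$ by (A)--(D) and \cite{LY}, so $Lx(T)=\lim_n Lx(t_n)=\lim_n z_n=\zeta=L\eta$, which is exactly Definition \ref{D.3.1}(d).

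I expect the main obstacle to be precisely the square-summability of the control pieces, because it is what forces the two non-obvious hypotheses to be used in tandem. Aiming at the fixed target makes $b_n$ small only at the rate at which the free flow relaxes $\zeta$, so without the regularity $L\eta\in D(A_H)$ (that is, $\eta\in D(A)$, which is why the conclusion is controllability \emph{to} $L(D(A))$ rather than to all of $H$) one obtains no usable rate, while without the bound $t\|Q_L^{-1}(t)\|\le M$ of (E) the factor $\|Q_L(|I_n|)^{-1}\|\sim 1/|I_n|$ would overwhelm it; only the precise balance $\|b_n\|^2/|I_n|=O(2^{-n})$ closes the argument. A secondary point requiring care is the justification, from (F$'$) alone, of both the semigroup property of $\mathcal{S}_H$ and the implication $\eta\in D(A)\Rightarrow L\eta\in D(A_H)$.
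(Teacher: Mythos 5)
Your proposal is correct and uses essentially the same method as the paper's own argument (the proof given for Theorem \ref{T.3.3}, which the paper describes as the \cite{B-AJC} proof of Theorem \ref{T.3.2} rerun under a modified condition): a dyadic partition $\tau_n=T2^{-n}$ of $[0,T]$, Gramian-based linear steering on each subinterval via Theorem \ref{T.3.1} with no fixed-point theorem, an $O(\tau_n)$ error from the bounded nonlinearity, and square-summability of the control pieces obtained by combining condition (E) with the rate $\|(\mathcal{S}(\tau)-I)\eta\|=O(\tau)$ available because $\eta\in D(A)$. The only minor differences are presentational: you aim each step at the fixed target $L\eta$ after reducing to the quotient semigroup $\mathcal{S}_H(t)=L\mathcal{S}(t)L^*$ on $H$ (a clean structural use of (F$'$)), whereas the paper aims at the moving target $L\mathcal{S}(\tau_n)\eta$ and invokes the invariance hypothesis only once, in the estimate of $\|\alpha_n\|=\|L\mathcal{S}(\tau_n)(\eta-x_{n-1})\|$.
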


Condition (F$'$) is suitable for semigroups generated by the second order differential operators but unsuitable for semigroups driving heat exchanger systems. Therefore, we will replace it with the following alternative condition and run the proof of Theorem \ref{T.3.2} from \cite{B-AJC} under renewed condition:
\begin{itemize}
\item [{\rm (F)}] For sufficiently small values of $\tau $, $P_LS(\tau )x=P_Lx$ for all $x\in X$.
\end{itemize} 
\begin{remark}
\label{R.3.1}
Condition (F) may seen impossible. However, we are going to verify it for the left translation semigroup $\mathcal{T}$ defined by \eqref{1.3}. Define the subspace of $X=L_2(0,1)$ as
\[
\tilde{H}=\{ x\in X: x(\theta )=0 \text{ if }\theta \in [0,\varepsilon ]\cup [1-\varepsilon ,1]\} ,
\] 
where $0<\varepsilon <1-\varepsilon <1$. Then for $0<\tau <\varepsilon $, we have 
\[
 [\mathcal{T}(\tau )x](\theta )=\begin{cases}x(\theta -\tau ) & \text{if } \tau \le \theta \le 1 \\ 0 & \text{otherwise} \end{cases}=\begin{cases}x(\theta ) & \text{if } 0\le \theta \le 1-\tau ,\\ 0 & \text{otherwise}, \end{cases}  
\]
implying 
\[
[P_L\mathcal{T}(\tau )x](\theta )=\begin{cases}x(\theta ) & \text{if } \varepsilon \le \theta \le 1-\varepsilon \\ 0 & \text{otherwise} \end{cases} =[P_Lx](\theta ).
\]
Similarly, if $0<\tau <\varepsilon $, then $P_L\mathcal{T}^*(\tau )x=P_Lx$.
\end{remark}
\begin{theorem}
\label{T.3.3}
Under conditions {\rm (A)--(F)}, system \eqref{3.1} is $L$-partially exact controllable to $L(D(A))$ on the interval $[0,T]$.
\end{theorem}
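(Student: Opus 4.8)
The plan is to follow, essentially verbatim, the piecewise steering-control construction behind Theorem~\ref{T.3.2} in \cite{B-AJC}, modifying only the one step where (F$'$) was invoked and inserting (F) in its place. First I would recall that Theorem~\ref{T.3.1} provides, for the linearized system \eqref{3.3}, the explicit control \eqref{3.4} steering the $L$-partial state from any $\xi$ to any target $L\eta$ over a window $[\tau,T]$ whenever $Q_L(T-\tau)$ is coercive; condition (E) is precisely what keeps the norm of this control bounded as the window shrinks, because $\|u\|$ scales with $\|Q_L^{-1}(T-\tau)\|$. The nonlinearity $f$ would be treated as a bounded perturbation with an integrable Lipschitz coefficient: since $f$ is bounded and that coefficient lies in $L_1(0,T)$, partitioning $[0,T]$ into subintervals of small enough Lipschitz mass makes the successive corrections of $f$ along the trajectory contract, so that the control and the mild solution are built explicitly piece by piece, with no appeal to a fixed-point theorem.

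The decisive point is that (F) delivers, in sharper form, exactly the structural property for which (F$'$) was used. Since $P_L=L^*L$ and $LL^*=I_H$, we have $L=LP_L$; hence for all sufficiently small $\tau$, condition (F) yields
\[
L\mathcal{S}(\tau)=LP_L\mathcal{S}(\tau)=LP_L=L,\qquad \mathcal{S}^*(\tau)L^*=L^*.
\]
Consequently, on any short terminal window $[T-\delta,T]$ the $L$-partial terminal state collapses to
\[
Lx^{\xi,u}(T)=Lx^{\xi,u}(T-\delta)+\int_{T-\delta}^{T}L\big(Bu(s)+f(s,x(s),u(s))\big)\,ds,
\]
so that $\mathcal{S}$ acts as the identity on the $\tilde H$-component throughout that window and $Q_L(\delta)=\delta\,LBB^*L^*$; the steering control \eqref{3.4} then becomes explicit, and the bound $\delta\|Q_L^{-1}(\delta)\|=\|(LBB^*L^*)^{-1}\|$ demanded by (E) is automatic. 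Whereas (F$'$) supplied only the invariance $P_L\mathcal{S}(t)=P_L\mathcal{S}(t)P_L$, (F) gives the strictly stronger identity action on $\tilde H$ for small times, and Remark~\ref{R.3.1} already certifies (F) for the left-translation semigroup \eqref{1.3}.

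With this reduction in hand I would run the construction of \cite{B-AJC}. Over $[0,T-\delta]$ the $L$-partial state is driven close to $L\eta$ by the steering control \eqref{3.4} adapted to this window, with the nonlinear term frozen at the current iterate; over the terminal window $[T-\delta,T]$ the collapsed steering control above is applied, chosen to absorb simultaneously the residual $L\eta-Lx(T-\delta)$ and the nonlinear contribution $\int_{T-\delta}^{T}Lf\,ds$. Iterating this prescription generates sequences $u_j$ and $x^{\xi,u_j}$, and, using boundedness of $f$, integrability of its Lipschitz coefficient, and the (E)-bound on the steering operator, I would show they are Cauchy and converge to a pair $(u,x^{\xi,u})$ with $Lx^{\xi,u}(T)=L\eta$ exactly. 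The restriction $\eta\in D(A)$ enters to keep the steering controls produced by \eqref{3.4} admissible and the target $L\eta$ exactly attainable, which is what the formulation ``$L$-partially exact controllable to $L(D(A))$'' records.

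The main obstacle I anticipate is that (F) holds only for small $\tau$, so the identity action on $\tilde H$ — and with it the clean steering formula — is available only on the terminal window. The delicate task is therefore to confine the decisive correction of the trajectory-dependent nonlinear term to that short window while keeping the control norm bounded through $\tau\|Q_L^{-1}(\tau)\|$, and to prove that the fixed-point-free successive scheme still converges uniformly to a genuine mild solution of \eqref{3.2}. Checking that the bounded nonlinearity with integrable Lipschitz coefficient does not spoil this convergence on the short window — that the geometric-contraction bookkeeping of the \cite{B-AJC} argument survives replacing the invariance (F$'$) by the small-time identity (F) — is the crux of the matter.
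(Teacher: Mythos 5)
Your algebraic observations are correct as far as they go: since $LL^*=I_H$, condition (F) does yield $L\mathcal{S}(\tau)=LP_L\mathcal{S}(\tau)=LP_L=L$ and $\mathcal{S}^*(\tau)L^*=L^*$ for small $\tau$, hence the collapsed representation of $Lx^{\xi,u}(T)$ on a short terminal window and $Q_L(\delta)=\delta\,LBB^*L^*$. But your overall scheme is not the paper's, and it has a genuine gap at precisely the point you yourself flag as ``the crux'': the convergence of the outer iteration $(u_j,x^{\xi,u_j})$. To absorb the residual \emph{and} the nonlinear contribution on $[T-\delta,T]$ you must solve an equation that is implicit in $u$, since $\int_{T-\delta}^{T}Lf(s,x(s),u(s))\,ds$ depends on the control both directly and through the trajectory. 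A Cauchy argument for the iterates requires the map $u\mapsto\int_{T-\delta}^{T}Lf(s,x^{\xi,u}(s),u(s))\,ds$, composed with the steering formula, to contract; but condition (D) gives only \emph{continuity} of $f$ in $u$ --- no Lipschitz modulus --- so the difference $f(s,x_{j-1},u_j)-f(s,x_{j-1},u_{j-1})$ admits no quantitative bound in terms of $\|u_j-u_{j-1}\|$. The Lipschitz-in-$x$ coefficient, however small its $L_1$ mass on the window, cannot compensate for this. Under the stated hypotheses the iteration you describe cannot be shown to converge; the missing piece is structural, not bookkeeping.

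The paper's proof avoids this difficulty entirely, and the contrast shows where your plan mislocates the roles of the hypotheses. It never cancels the nonlinear contribution: it partitions $[0,T]$ into infinitely many dyadic windows $[t_{n-1},t_n]$ with $\tau_n=T/2^n$, $t_n\to T$, and on each window applies the linear steering control of Theorem \ref{T.3.1} with target $L\mathcal{S}(\tau_n)\eta$, the datum $x_{n-1}$ being already determined by the preceding pieces --- a single forward pass, no outer iteration over trajectories. The nonlinearity enters only through the error estimate \eqref{3.8}, $\|Lx_n-L\eta\|\le\|\mathcal{S}(\tau_n)\eta-\eta\|+MK\tau_n\to 0$, which uses just \emph{boundedness} of $f$; exactness at $T$ is then obtained in the limit $t_n\to T$ by continuity of the mild solution. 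Conditions (F), (E) and the restriction $\eta\in D(A)$ serve one purpose only: proving the assembled control is admissible. There, (F) replaces $\|L\mathcal{S}(\tau_n)(\eta-x_{n-1})\|$ by $\|L(\eta-x_{n-1})\|$, (E) bounds $\tau_n\|Q^{-1}_L(\tau_n)\|$, and $\eta\in D(A)$ bounds the difference quotients $\|(\mathcal{S}(\tau_{n-1})\eta-\eta)/\tau_{n-1}\|$, giving $\int_{t_{n-1}}^{t_n}\|u_n(t)\|^2\,dt\le c\tau_n$ and hence a convergent series. In your scheme, with a single fixed terminal window, $\eta\in D(A)$ plays no genuine role --- a further indication that the reconstruction diverges from the mechanism that actually makes the theorem true.
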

\begin{proof}
Let the initial state $x_0\in X$ and $\eta \in D(A)$ be arbitrarily fixed. We construct a control $u$ steering $x_0$ to $\eta $ in the following way. 

Denote $\tau _n=T/2^n$ for $n=1,2,\ldots $ Obviously, $\sum _{n=1}^\infty \tau _n=T$. Let 
 \[
 t_0=0,\ t_1=\tau _1, \ldots ,\ t_n=\sum _{k=1}^n\tau _k,\ldots 
 \]
Then 
\[
\lim _{n\to \infty }t_n=\sum _{k=1}^\infty \tau _k=T.
\]
By Theorem \ref{T.3.1}, linear system \eqref{3.3} is $L$ partially exact controllable on  $[t_0,t_1]$ and the control
 \[
 u_1(t)=B^*\mathcal{S}^*(t_1-t)L^*Q^{-1}_L(\tau _1)L\mathcal{S}(\tau _1)(\eta -x_0),\ t_0\le t\le t_1,
 \]
steers $x_0$ to $y(t_1)$ with $Ly(t_1)=L\mathcal{S}(\tau _1)\eta $, that is,
 \[
 L\mathcal{S}(\tau _1)\eta =L\mathcal{S}(\tau _1)x_0+\int _{t_0}^{t_1}L\mathcal{S}(t_1-s)Bu_1(s)\,ds.
 \]
Letting $u(t)=u_1(t)$ for $t_0\le t\le t_1$, we obtain 
 \[
L x^{x_0,u}(t_1)=L\mathcal{S}(\tau _1)\eta +\int _{t_0}^{t_1}L\mathcal{S}(t_1-s)f\big( s,x^{x_0 ,u}(s),u(s)\big) \,ds.
 \] 
For brevity, we denote $x^{x_0 ,u}(t_1)=x_1$.
 
Considering \eqref{3.1} on the interval $[t_1,t_2]$, we let  
 \[
 u_2(t)=B^*\mathcal{S}^*(t_2-t)L^*Q^{-1}_L(\tau _2)L\mathcal{S}(\tau _2)(\eta -x_1),\ t_1\le t\le t_2.
 \]
By Theorem \ref{T.3.1}, the control $u_2$ steers $x_1$ to $y(t_2)$ with $Ly(t_2)=L\mathcal{S}(\tau _2)\eta $, that is,    
 \[
 L\mathcal{S}(\tau _2)\eta =L\mathcal{S}(\tau _2)x_1+\int _{t_1}^{t_2}L\mathcal{S}(t_2-s)Bu_2(s)\,ds.
 \]
Letting $u(t)=u_2(t)$ for $(t_1,t_2]$, we obtain
\[
Lx^{x_0,u}(t_2) =L\mathcal{S}(\tau _2)\eta +\int _{t_1}^{t_2}L\mathcal{S}(t_2-s)f\big( s,x^{x_0,u}(s),u(s)\big) \,ds.
\]
For brevity, let $x^{x_0 ,u}(t_2)=x_2$. Continuing this procedure, we recursively obtain the sequence of controls  
 \begin{equation}
 \label{3.5}
 u_n(t)=B^*\mathcal{S}^*(t_n-t)L^*Q^{-1}_L(\tau _n)L\mathcal{S}(\tau _n)(\eta -x_{n-1}),\ t_{n-1}\le t\le t_n.
 \end{equation}
Using these controls as parts of the total control, we define  
 \[
 u(t)=\left\{ \begin{array}{ll} u_1(t), & \text{if}\ t_0\le t\le t_1,\\
                                           u_2(t), & \text{if}\ t_1< t\le t_2,\\
                                    \ldots\ldots & \ldots \ldots \ldots \ldots \\
                                           u_n(t), & \text{if}\ t_{n-1}< t\le t_n,\\
                                   \ldots \ldots & \ldots \ldots \ldots \ldots 
 \end{array}\right.                                               
 \]
Then
 \begin{equation}
 \label{3.6}
 L\mathcal{S}(\tau _n)\eta =L\mathcal{S}(\tau _n)x_{n-1}+\int _{t_{n-1}}^{t_n}L\mathcal{S}(t_n-s)Bu(s)\,ds,
 \end{equation}
implying 
\begin{equation}
\label{3.7}
Lx^{x_0 ,u}(t_n)=L\mathcal{S}(\tau _n)\eta +\int _{t_{n-1}}^{t_n}L\mathcal{S}(t_n-s)f\big( s,x^{x_0 ,u}(s),u(s)\big) \,ds. 
 \end{equation} 
Denote $x^{x_0 ,u}(t_n)=x_n$. To estimate $\| Lx_n-L\eta \| $, let 
\[
K=\sup _{[0,T]}\| \mathcal{S}(t)\| \ \text{and}\ M=\sup _{[0,T]\times X\times U}\| f(t,x,u)\| .
\]
Using $\| L\| \le 1$ and \eqref{3.7}, we have 
\begin{align}
\label{3.8}
\| Lx_n-L\eta \| 
    & \le \| L( x_n-\mathcal{S}(\tau _n)\eta )\| +\| L(\mathcal{S}(\tau _n)\eta -\eta )\|  \nonumber \\
    & \le \| L(\mathcal{S}(\tau _n)\eta -\eta )\|  +\int _{t_{n-1}}^{t_n} \big\| L\mathcal{S}(t_n-s)f\big( s,x^{x_0 ,u}(s),u(s)\big) \big\| \,ds\nonumber \\
    & \le \| \mathcal{S}(\tau _n)\eta -\eta \|  +\int _{t_{n-1}}^{t_n}\| \mathcal{S}(t_n-s)\| \big\| f\big( s,x^{x_0 ,u}(s),u(s)\big) \big\| \,ds\nonumber \\
    & \le \| \mathcal{S}(\tau _n)\eta -\eta \|  +MK \tau _n,\ n=1,2,\ldots 
 \end{align}
Therefore, $\lim _{n\to \infty }Lx_n=L\eta $ since $\mathcal{S}(t)$ is strongly continuous and $\lim _{n\to \infty }\tau _n=0$. 

Now let us prove that $u\in U_{\rm ad}$. By construction, each piece $u_n$ of $u$ is continuous on the interval $(t_{n-1},t_n]$, implying the measurability of $u$. Moreover,
\begin{align}
\label{3.9}
\int _{t_{n-1}}^{t_n}\| u_n(t)\| ^2\,dt 
 & = \int _{t_{n-1}}^{t_n}\big\| B^*\mathcal{S}^*(t_n-t)L^*Q^{-1}_L(\tau _n)L \mathcal{S}(\tau _n)(\eta -x_{n-1})\big\| ^2 \,dt\nonumber \\
 & =\int _{t_{n-1}}^{t_n}\big\langle  L\mathcal{S}(t_n-t)BB^*\mathcal{S}^*(t_n-t)L^*Q^{-1}_L(\tau    _n)\alpha _n,Q^{-1}_L(\tau _n)\alpha _n\big\rangle \,dt\nonumber \\
 & =\big\langle \alpha _n,Q^{-1}_L(\tau _n)\alpha _n \big\rangle \le \big\| Q^{-1}_L(\tau _n)\big\| \| \alpha _n\| ^2,\ n=1,2,\ldots , 
\end{align}
where $\alpha _n=L\mathcal{S}(\tau _n)(\eta -x_{n-1})$. Since $\lim _{n\to \infty } \tau _n=0$, by condition (F), we can use the equality 
\[
P_L\mathcal{S}(\tau _n)(\eta -x_{n-1})=P_L(\eta -x_{n-1})
\]
for sufficiently large $n$. Then, by \eqref{3.8},
\begin{align*}
\| \alpha _n\| ^2
   & =\langle L\mathcal{S}(\tau _n)(\eta -x_{n-1}),L\mathcal{S}(\tau _n)(\eta -x_{n-1})\rangle =\langle L^*L\mathcal{S}(\tau _n)(\eta -x_{n-1}),\mathcal{S}(\tau _n)(\eta -x_{n-1})\rangle \\
   & =\langle P_L\mathcal{S}(\tau _n)(\eta -x_{n-1}),\mathcal{S}(\tau _n)(\eta -x_{n-1})\rangle =\langle P_L^2\mathcal{S}(\tau _n)(\eta -x_{n-1}),\mathcal{S}(\tau _n)(\eta -x_{n-1})\rangle \\
   & =\langle P_L\mathcal{S}(\tau _n)(\eta -x_{n-1}),P_L\mathcal{S}(\tau _n)(\eta -x_{n-1})\rangle =\| P_L\mathcal{S}(\tau _n)(\eta -x_{n-1})\| ^2\\
   & =\| P_L(\eta -x_{n-1})\| ^2=\| L^*L(\eta -x_{n-1})\| ^2\le \| L(\eta -x_{n-1})\| ^2\\
   & \le (\| \mathcal{S}(\tau _{n-1})\eta -\eta \|  +MK \tau _{n-1})^2,\ n=2,3,\ldots 
\end{align*}
By \eqref{3.9}, this implies
\begin{align*}
\int _{t_{n-1}}^{t_n}\| u_n(t)\| ^2\,dt
  &\le \big\| Q^{-1}_L(\tau _n)\big\| (\| \mathcal{S}(\tau _{n-1})\eta -\eta \|  +MK \tau _{n-1})^2\\
  &=\tau _n^2\| Q^{-1}_L(\tau _n)\| \cdot \frac{\tau _{n-1}^2}{\tau _n^2}\cdot \bigg( \bigg\| \frac{\mathcal{S}(\tau _{n-1})\eta -\eta }{\tau _{n-1}}\bigg\| +MK\bigg) ^2,\\ 
  &=4\tau _n^2\| Q^{-1}_L(\tau _n)\| \bigg( \bigg\| \frac{\mathcal{S}(\tau _{n-1})\eta -\eta }{\tau _{n-1}}\bigg\| +MK\bigg) ^2,\ n=2,3,\ldots   
\end{align*}
Here, $\tau _n\| Q^{-1}_L(\tau _n)\| $ is bounded by condition (E). Also, $\| (\mathcal{S}(\tau _{n-1})\eta -\eta )/\tau _{n-1}\| $ is bounded since $\eta \in D(A)$. Therefore, there is a constant $c>0$ such that
\[
\int _{t_{n-1}}^{t_n}\| u_n(t)\| ^2\,dt\le c\tau _n,\ \ n=2,3,\ldots , 
\]
implying
\begin{align*}
\int _0^T\| u(t)\| ^2\, dt
   & =\int _0^{t_1}\| u(t)\| ^2\, dt+\int _{t_1}^T\| u(t)\| ^2\, dt=\int _0^{t_1}\| u(t)\| ^2\, dt+\sum _{n=1}^\infty \int _{t_n}^{t_{n+1}}\| u_{n+1}(t)\| ^2\,dt\\
   &  \le \int _0^{t_1}\| u(t)\| ^2\, dt+c\sum _{n=1}^\infty \tau_{n+1}=\int _0^{t_1}\| u(t)\| ^2\, dt+cT\sum _{n=1}^\infty \frac{1}{2^{n+1}}\\
   & =\int _0^{t_1}\| u(t)\| ^2\, dt+\frac{cT}{2}<\infty . 
\end{align*}
This completes the proof.
\end{proof}


\section{Monotubular heat exchanger equation}
 
We consider a controlled semilinear system described by the following monotubular heat exchanger equation
\begin{equation}
\label{4.1}
\begin{cases}
\frac{\partial x(t,\theta )}{\partial t} = -\frac{\partial x(t,\theta )}{\partial \theta } -ax(t,\theta ) + bu(t,\theta )+f(t,x(t,\theta ),u(t,\theta )), \quad 0<t\le T, \quad 0 <\theta < 1, \\
 x(0,\theta )=x_{0}(\theta ),\ x(t,1)=0,\quad 0<t\le T.
\end{cases}
\end{equation}
Let $X= L_2(0, 1)$ be the state space. Define the operators $A_0$, $A_1$, and $B$ as follows:
\begin{itemize}
\item
$A_0x=-dx/d\theta $, $x\in D(A_0) =\{ x\in X : dx/d\theta \in X,\, x(1)=0\} $,
\item
$[A_1x](\theta)=-ax(\theta )$, $0\le \theta \le 1$, $x\in X$,
\item
$[Bu](\theta )=bu(\theta )$, $0\le \theta \le 1$, $u\in X$,
\end{itemize}
where $b\not= 0$ and $a>0$ are constants. Here, $A_0$ is the infinitesimal generator of the left-translation semigroup $\{ \mathcal{T}(t): t\ge 0\} $ from \eqref{1.3}. Then, by Theorem \ref{T.2.1}, the semigroup generated by $A=A_0+A_1$ associated with the mono-tubular heat exchanger equation is
\begin{equation}
[\mathcal{S}(t)x](\theta ) = 
\begin{cases}
e^{-at}x (\theta -t), & \text{if } \theta -t\ge 0, \\
0, & \text{if } \theta -t<0,
\end{cases}
\end{equation}
for $t \ge 0$ and $0\le \theta \le 1$.
Since $A_1^*=A_1$, it follows that 
\[
A^*=A_0^*+A_1^*=(-\partial /\partial \theta )^*+A_1= (\partial /\partial \theta )+A_1.
\]
The semigroup generated by $A_0^*$ is the right-translation semigroup $\{ \mathcal{T}^*(t): t\ge 0\} $ from \eqref{1.4} and, therefore,
$A^*$ generates the semigroup
\begin{equation}
[\mathcal{S}^{*}(t)x](\theta ) = 
\begin{cases}
e^{-at}x(\theta +t), & \text{if } \theta +t\le 1, \\
0, & \text{if } \theta +t>1.
\end{cases}
\end{equation}
for $t\ge 0$ and $0\le \theta \le 1$. Then the controllability Gramian $Q(t)$ becomes
\begin{align*}
[Q(t)x](\theta ) 
    &= \int_0^t\mathcal{S}(s)BB^*\mathcal{S}^*(s)x(\theta )\,ds\\
    &= b^2\int_0^{\min (t,\theta ,1-\theta )}e^{-2as}x(\theta )\,ds=\frac{b^2\big( 1-e^{-2a\min(t,\theta ,1-\theta )}\big) }{2a}x(\theta ).
\end{align*}
Therefore,
\begin{equation}
\label{4.4}
\langle Q(t)x,x\rangle _X=\frac{b^2}{2a}\int _0^1\big( 1-e^{-2a\min (t,\theta ,1-\theta )}\big) x^2(\theta )\,d\theta .
\end{equation}

Now, take any $0<\varepsilon <1/2$ and consider $X_\varepsilon =L_2(\varepsilon ,1-\varepsilon )$. Respectively, let
\[
\tilde{X}_\varepsilon =\{ x\in L_2(0,1): x(\theta )=0\text{ if }\theta \in [0,\varepsilon )\cup (1-\varepsilon ,1]\}.
\]
Define $L_\varepsilon $ as an operator assigning to $x\in L_2(0,1)$ its restriction to $[\varepsilon ,1-\varepsilon ]$. Then $P_{L_\varepsilon }=L^*_\varepsilon L_\varepsilon $ is the projection operator from $L_2(0,1)$ to its subspace $\tilde{X}_\varepsilon $. From \eqref{4.4}, we have
\[
\langle Q_{L_\varepsilon }(t)x,x\rangle _{X_\varepsilon }=\langle L_\varepsilon Q(t)L_\varepsilon ^*x,x\rangle _{X_\varepsilon }=\frac{b^2}{2a}\int _\varepsilon ^{1-\varepsilon }\big( 1-e^{-2a\min (t,\theta ,1-\theta )}\big) x^2(\theta )\,d\theta .
\]
For $0<t<\varepsilon $, this becomes
\[
\langle Q_{L_\varepsilon }(t)x,x\rangle _{X_\varepsilon }=\frac{b^2(1-e^{-2at})}{2a}\int _\varepsilon ^{1-\varepsilon }x^2(\theta )\,d\theta ,
\]
implying that $Q_{L_\varepsilon }(t)$ is coercive. Therefore, $Q_{L_\varepsilon }(t)$ is coercive for $\varepsilon \le t\le T$ as well because $Q_{L_\varepsilon }(t)\ge Q_{L_\varepsilon }(s)$ for $t\ge s$. Finally, using the inequality $e^z-1\ge z$, we obtain 
\[
\frac{b^2(1-e^{-2at})}{2a}\ge b^2t,
\] 
demonstrating $t^{-1}\langle Q_{L_\varepsilon }(t)x,x\rangle _{X_\varepsilon }\ge b^2\| x\| _{X_\varepsilon }^2$ and implying $t\| Q^{-1}_{L_\varepsilon }\| _{X_\varepsilon }\le b^{-2}$. Therefore, condition (E) holds. Condition (F) holds as well because of Remark \ref{R.3.1}. Thus, assuming that $f$ satisfies condition (D), we obtain that system \eqref{4.1} is $L_\varepsilon $-partially exact controllable for all $0<\varepsilon <1/2$. In other words, {\it for all $0<\varepsilon <1/2$ and for every absolutely continuous function $y$ with $L_2$-derivative which has zero values on the intervals $[0,\varepsilon )$ and $(1-\varepsilon ,1]$, any initial function from $L_2(0,1)$ can be steered to $x_T$ with $x_T(\theta )=y(\theta )$ on $[\varepsilon ,1-\varepsilon]$}.

    
\section{Two-stream parallel-flow heat exchanger equation} 

Monotubular heat exchanger equation from the previous section motivates to consider two-stream parallel-flow heat exchanger equation as well.  Now, we consider a semilinear system described by the following two-stream parallel-flow heat exchanger equation
\begin{equation}
\label{5.1}
\begin{cases}
     \frac{\partial x_1(t,\theta)}{\partial t} = -\frac{\partial x_1(t,\theta)}{\partial \theta} +h_1(x_2(t,\theta) - x_1(t,\theta)) + b_1u_1(t,\theta)\\
     \ \ \ \ \ \ \ \ \ \ \ \ \ +f_1(x_1(t,\theta ),x_2(t,\theta ),u_1(t,\theta ),u_2(t,\theta )), \quad 0<t\le T, \quad 0 <\theta< 1, \\
     \frac{\partial x_{2}(t,\theta)}{\partial t} = -\frac{\partial x_{2}(t,\theta)}{\partial \theta} +h_{2}(x_{1}(t,\theta) - x_{2}(t,\theta))+ b_{2}u_{2}(t,\theta)\\
      \ \ \ \ \ \ \ \ \ \ \ \ \ +f_2(x_1(t,\theta ),x_2(t,\theta ),u_1(t,\theta ),u_2(t,\theta )), \quad 0<t\le T, \quad 0 <\theta< 1, \\
     x_1(0,\theta) = x_{10}(\theta), \quad x_2(0,\theta) = x_{20}(\theta), \quad 0 \leq \theta \leq 1, \\
     x_1(t, 1) = x_2(t, 1)=0, \quad 0<t\le T,
\end{cases}
\end{equation}
where $x_{1}(t,\theta)$ and $x_{2}(t,\theta)$ represent the cold and hot stream temperatures at time $t$ and position $\theta$, $x_{10}$ and $x_{20}$ the initial temperature profiles, $u_{1}(t,\theta)$ and $u_{2}(t,\theta)$ are control functions, $b_1$ and $b_2$ are nonzero constants, $h_{1}$ and $h_{2}$ are positive physical constants related with the \textit{flow thermal capacity}, which reflects the capacity of gaining or losing heat for a unit temperature change of the inner and outer tubes, respectively.

Denote $X= L_2(0, 1)$. Let $X^2=L_2(0, 1)\times  L_2(0, 1)$ be the state space. Define the operators $A_0$, $A_1$, and $B$ as follows:
\begin{itemize}
\item
$A_0 \begin{bmatrix} x_1\\x_2\end{bmatrix}=\begin{bmatrix}
 -dx_1/d\theta & 0 \\
0 & -dx_2/d\theta 
\end{bmatrix}$,\ $\begin{bmatrix} x_1\\x_2\end{bmatrix}\in D(A_0)$,
\item
$A_1  \begin{bmatrix} x_1 \\ x_2 \end{bmatrix} = \begin{bmatrix} -h_1 & h_1 \\ h_2 & -h_2  \end{bmatrix}  \begin{bmatrix} x_{1} \\ x_{2} \end{bmatrix} = \begin{bmatrix} h_1(x_2-x_1) \\ h_2(x_1-x_2) \end{bmatrix}$, $x_1, x_2\in X$, 
\item
$Bu=\begin{bmatrix} b_1 & 0 \\ 0 & b_2 \end{bmatrix}
\begin{bmatrix} u_1 \\ u_2 \end{bmatrix}$, $u_1,u_2\in X$,
\end{itemize}
where 
\[
D(A_0)=\bigg\{\begin{bmatrix} x_1\\x_2\end{bmatrix}\in X^2: dx_1/d\theta , dx_2/d\theta \in X, x_1(1)=x_2(1)=0\bigg\} .
\]
Here, $A_0$ is the infinitesimal generator of the semigroup 
\begin{equation*}
\mathcal{U}(t) = \begin{bmatrix} \mathcal{T}(t) & 0 \\ 0 & \mathcal{T}(t) \end{bmatrix}=\mathcal{T}(t) \begin{bmatrix} I & 0 \\ 0 & I \end{bmatrix},\ t\ge 0,
\end{equation*}
where $\{ \mathcal{T}(t): t\ge 0\} $ denotes the left-translation semigroup defined by \eqref{1.3}. Also, using the diagonalization technique of the matrix exponential function, it can be computed that
\begin{equation}
 e^{A_1t}= \begin{bmatrix}
 \frac{h_2}{h_1+h_2} + \frac{h_1}{h_1+h_2}e^{-(h_1+h_2)t} & \frac{h_1}{h_1+h_2}(1-e^{-(h_1+h_2)t})\\
\frac{h_2}{h_1+h_2}(1-e^{-(h_1+h_2)t}) & \frac{h_1}{h_1+h_2} + \frac{h_2}{h_1+h_2}e^{-(h_1+h_2)t}
\end{bmatrix}.
\end{equation}
Since $\mathcal{U}(t)$ and $e^{A_1s}$ commute, by Theorem \ref{T.2.1}, we obtain that $A=A_0+A_1$ generates the semigroup $\mathcal{S}(t)=\mathcal{U}(t)e^{A_1t}=e^{A_1t}\mathcal{U}(t)$. Then $A^*=A^*_0+A^*_1$ generates the semigroup $\mathcal{S}^*(t) = \mathcal{U}^*(t)e^{A^*_1t}=e^{A^*_1t}\mathcal{U}^*(t)$, where
\[
\mathcal{U}^*(t) = \begin{bmatrix} \mathcal{T}^*(t) & 0 \\ 0 & \mathcal{T}^*(t) \end{bmatrix}= \mathcal{T}^*(t)\begin{bmatrix} I & 0 \\ 0 & I \end{bmatrix},
\]
$\{ \mathcal{T}^*(t):t\ge 0\} $ is a semigroup of right-translation from \eqref{1.4}, and 
\[
 e^{A^*_1t} = \begin{bmatrix}
 \frac{h_2}{h_1+h_2} + \frac{h_1}{h_1+h_2}e^{-(h_1+h_2)t} & \frac{h_2}{h_1+h_2}(1-e^{-(h_1+h_2)t})\\
\frac{h_1}{h_1+h_2}(1-e^{-(h_1+h_2)t}) & \frac{h_1}{h_1+h_2} + \frac{h_2}{h_1+h_2}e^{-(h_1+h_2)t}.\end{bmatrix}
\]

To simplify the complexity of further calculations, we will consider a particular case when $h_1=h_2=1/2$ and $b_1=b_2=1$ noting that the conclusion of this section remains valid for general $b_1\not= 0$, $b_2\not= 0$, $h_1>0$, and $h_2>0$. Then for $0\le \theta \le 1$, we have
\begin{align*}
[Q(t)x](\theta ) 
    &= \int_0^t[\mathcal{S}(s)BB^*\mathcal{S}^*(s)x](\theta )\,ds=\int_0^t[\mathcal{T}(s)e^{A_1s} 
       e^{A^*_1s}\mathcal{T}^{*}(s) x](\theta)\,ds\\
    &= \frac{1}{2}\int_0^{\min (t,\theta ,1-\theta )}\begin{bmatrix}
       1+e^{-s} & 1-e^{-s} \\ 1-e^{-s} & 1+e^{-s} 
       \end{bmatrix} ^2
       \begin{bmatrix} x_{1}(\theta) \\ x_{2}(\theta) \end{bmatrix}\, ds\\  
     &= \int_0^{\min (t,\theta ,1-\theta )}\begin{bmatrix}
       1+e^{-2s} & 1-e^{-2s} \\ 1-e^{-2s} & 1+e^{-2s}
       \end{bmatrix} 
       \begin{bmatrix} x_{1}(\theta) \\ x_{2}(\theta \end{bmatrix}\, ds\\
    &=\frac{1}{2}\begin{bmatrix}
      u(t,\theta) & v(t,\theta) \\ v(t,\theta) & u(t,\theta) \end{bmatrix}
     \begin{bmatrix} x_1(\theta) \\ x_2(\theta) \end{bmatrix},
\end{align*}
where
\begin{align*}
u(t,\theta)&=2\min ( t,\theta , 1-\theta )-e^{-2\min ( t,\theta , 1-\theta )}+1, \\
v(t,\theta)&=2\min ( t,\theta , 1-\theta )+e^{-2\min ( t,\theta , 1-\theta )}-1. 
\end{align*}
Therefore,
\begin{equation} 
\label{5.3}
\langle Q(t)x,x\rangle_{X^2} = \frac{1}{2}\int_0^1\big( u(t,\theta )\big( x_1^2(\theta )+x_2^2(\theta )\big) +2v(t,\theta ) x_1(\theta )x_2(\theta )\big) \,d\theta 
\end{equation}
Similar to derivation of \eqref{1.5}, by the second mean value theorem for integrals, we can find $0<\xi _1<1/2<\eta _1<1$ and $0<\xi _2<1/2<\eta _2<1$ such that 
\begin{align*}
2\langle Q(t)x,x\rangle_{X^2}
    &= (2\min (t,1/2)-e^{2\min (t,1/2)}+1)\int _{\xi _1}^{\eta _1}(x_1^2(\theta )+x_2^2(\theta ))\,d\theta \\
    &\ \ \ +(2\min (t,1/2)+e^{2\min (t,1/2)}-1)\int _{\xi _2}^{\eta _2}2x_1(\theta )x_2(\theta )\,d\theta .
\end{align*}
We obtain that if $x_1(\theta )=x_2(\theta )=0$ for $0<\min (\xi_1,\xi_2)\le \theta \le \max (\eta _1,\eta _2)<1$, then $\langle Q(t)x,x\rangle_{X^2}=0$. Therefore, $Q(t)$ fails to be coercive (even $Q(t)\not> 0$).

Now, take any $0<\varepsilon <1/2$ and consider $X^2_\varepsilon =L_2(\varepsilon ,1-\varepsilon )\times L_2(\varepsilon ,1-\varepsilon )$. Respectively, let
\[
\tilde{X}^2_\varepsilon =\{ x\in X^2: x(\theta )=0\text{ if }\theta \in [0,\varepsilon )\cup (1-\varepsilon ,1]\}.
\]
Define $L_\varepsilon $ as an operator assigning to $x\in X^2$ its restriction to $[\varepsilon ,1-\varepsilon ]$. Then $P_{L_\varepsilon }=L^*_\varepsilon L_\varepsilon $ is the projection operator from $X^2$ to its subspace $\tilde{X}^2_\varepsilon $. Similar to \eqref{5.3}, we have 
\begin{equation} 
\label{5.4}
\langle Q_{L_\varepsilon }(t)x,x\rangle_{X^2_\varepsilon } = \frac{1}{2}\int_\varepsilon ^{1-\varepsilon }\big( u(t,\theta )\big( x_1^2(\theta )+x_2^2(\theta )\big) +2v(t,\theta ) x_1(\theta )x_2(\theta )\big) \,d\theta 
\end{equation}
For $0<t<\varepsilon $, $u$ and $v$ become 
\[
u(t,\theta)=2t-e^{-2t}+1 \text{ and }v(t,\theta)=2t+e^{-2t}-1. 
\]
Then from \eqref{5.4},
\[
\langle Q_{L_\varepsilon }(t)x,x\rangle_{X^2_\varepsilon } = \int_\varepsilon ^{1-\varepsilon }\bigg( t(x_1(\theta )+x_2(\theta ))^2+\frac{t(e^{-2t}-1)}{-2t}(x_1(\theta )-x_2(\theta ))^2\bigg) \,d\theta
\]
Using the inequality $e^z-1\ge z$, we arrive to
\begin{align}
\label{5.5}
\langle Q_{L_\varepsilon }(t)x,x\rangle_{X^2_\varepsilon } 
   & \ge  t\int_\varepsilon ^{1-\varepsilon }\big((x_1(\theta )+x_2(\theta ))^2+(x_1(\theta )-x_2(\theta ))^2\big) \,d\theta \nonumber \\
   & \ge t\int_\varepsilon ^{1-\varepsilon }\big(x_1(\theta )^2+x_2(\theta )^2)\,d\theta =t\| x\| ^2_{X^2_\varepsilon }.
\end{align}
Thus, $Q_{L_\varepsilon }(t)$ is coercive for all $0<t<\varepsilon $. Since $Q_{L_\varepsilon }(t)\le Q_{L_\varepsilon }(s)$ for $0\le t\le s$, $Q_{L_\varepsilon }(t)$ becomes coercive for all $0<t\le T$. Also, \eqref{5.5} implies that $t\| Q^{-1}_{L_\varepsilon }(t)\| $ is bounded on $[0,\varepsilon ]$. Therefore, it is bounded on $[0,T]$ because it is continuous on $[\varepsilon ,T]$. So, condition (E) holds. Condition (F) also holds because of Remark \ref{R.3.1}. Thus, assuming that $f_1$ and $f_2$ form a function $f$ on $[0,T]\times X^2\times X^2$, we obtain that the system \eqref{5.1} is $L_\varepsilon $-partially exact controllable for all $0<\varepsilon <1/2$.


\section{Conclusion}

In this paper, a sufficient condition for a partially exact controllability to the domain of an abstract semilinear control system is proved. This sufficient condition is tested on semilinear monotubular and two stream parallel-flow heat exchanger systems. It is shown that they are not exactly controllable, but partially exact controllable to the domain. Specifically, this means that {\it for all $0<\varepsilon <1/2$ and for every absolutely continuous scalar, respectively, two-component vector-valued function $y$ with $L_2$-derivative  which has zero values on the intervals $[0,\varepsilon )$ and $(1-\varepsilon ,1]$, any initial function from $L_2(0,1)$, respectively, $L_2(0,1)\times L_2(0,1)$ can be steered to the terminal state $x(T)$ with $x(T)(\theta )=y(\theta )$ on $[\varepsilon ,1-\varepsilon]$}.

We expect this result to hold for the three-fluid parallel-flow heat-exchanger equations as well. We therefore leave this extension to future work, building on the ideas in \cite{MC, BM2005} .

\section{Acknowledgments}
The second author acknowledges support by the Open Access Publication Fund of the University of Duisburg-Essen.


\end{document}